\documentclass[12pt]{amsart}
\usepackage{amsmath}
\usepackage{amssymb}
\usepackage{graphicx}
\usepackage{pifont}
\usepackage{epsfig}
\usepackage{amscd}
\usepackage{latexsym}
\usepackage{amsfonts}

\usepackage{color}
\newcommand{\cb}{\color{black}}

\newtheorem{theorem}{Theorem}

\newtheorem{lemma}{Lemma}[section]

\setlength{\oddsidemargin}{5mm}
\setlength{\evensidemargin}{0mm}
\setlength{\topmargin}{-0.3cm} \setlength{\headheight}{18pt}
\setlength{\headsep}{12pt}
\setlength{\textwidth}{160mm}
\setlength{\textheight}{227mm}

\newenvironment{Eqnarray*}{\arraycolsep 0.14em\begin{eqnarray*}}
{\end{eqnarray*}\hspace*{-1mm}}

\newcommand\be{\begin{equation}}
\newcommand\ee{\end{equation}}
\newcommand\bea{\begin{eqnarray}}
\newcommand\eea{\end{eqnarray}}
\newcommand\beaa{\begin{eqnarray*}}
\newcommand\eeaa{\end{eqnarray*}}

\newcommand\BR{{\mathbb R}}

\newcommand\qh{\quad\hbox}

\def\eps{\varepsilon}

\begin{document}

\title[No touchdown]
{No touchdown at zero points of the permittivity profile for the MEMS problem}

\author{Jong-Shenq Guo}
\address{Department of Mathematics, Tamkang University,
151, Yingzhuan Road, Tamsui, New Taipei City 25137, Taiwan}
\email{jsguo@mail.tku.edu.tw}

\author{Phlippe Souplet}
\address{Universit\'e Paris 13, Sorbonne Paris Cit\'e,
Laboratoire Analyse, G\'eom\'etrie et Applications, CNRS (UMR 7539),
93430 Villetaneuse, France}
\email{souplet@math.univ-paris13.fr}

\thanks{JSG is partially supported by
the National Science Council of Taiwan under the grant NSC 102-2115-M-032-003-MY3.
PhS is grateful for the hospitality of the Math.~Dept. of the Tamkang University, where part of this work was done.}

\date{\today}

\begin{abstract}
We study the quenching behavior for a semilinear heat equation
arising in models of micro-electro mechanical systems (MEMS).
The problem involves a source term with a spatially dependent potential,
given by the dielectric permittivity profile, and quenching corresponds to a touchdown phenomenon.
It is well known that quenching does occur.
We prove that {\bf touchdown cannot occur at zero points of the permittivity profile.}
In particular, we remove the assumption of compactness of the touchdown set,
made in all previous work on the subject {and whose validity is unknown in
most typical cases.}
This answers affirmatively a conjecture made in
{[\it Y. Guo, Z. Pan and M.J. Ward, SIAM J. Appl. Math {\textbf{66} (2005)}, 309--338]}
on the basis of numerical evidence.
The result crucially depends on a new type~I estimate of the quenching rate, that we establish.
In addition we obtain some sufficient conditions for compactness of the touchdown set, without convexity assumption on the domain.
These results may be of some qualitative importance in applications to MEMS optimal design,
especially for devices such as micro-valves.

\medskip
\noindent{\bf Keywords.}
Micro-electro mechanical systems (MEMS), touchdown, quenching points,
permittivity profile, type~I estimate, compactness.

\medskip
\noindent{\bf AMS Classifications.}
Primary 35K55, 35B40, 35B44; Secondary 74K15, 74F15.

\end{abstract}

\maketitle

\section{Introduction {and main results}}
\setcounter{equation}{0}

In this paper, we consider the problem:
\bea
&&u_t= \Delta u+f(x)(1-u)^{-p},\quad {x\in\Omega},\; t>0,\label{pdeGen}\\
&&u=0, \quad  x\in\partial\Omega,\; t>0,\label{bcGen}\\
&&u(x,0)=0,\quad  x\in\Omega,\label{icGen}
\eea
 where $\Omega$ is a bounded domain of $\BR^n$ ($n\ge 1$), {of class $C^{2+\nu}$ for some $\nu>0$,}
\be\label{Hypfp}
\hbox{ $p>0$ and $f$ is a nonnegative, H\"older continuous function on~$\overline\Omega$, with $f\not\equiv 0$.}
\ee

A typical case of interest is the following
\bea
&&u_t={\Delta u}+\lambda {|x|^m}(1-u)^{-2},\quad {x\in\Omega}, \; t>0,\label{pde}\\
&&u=0, \quad {x\in\partial\Omega},\quad t>0,\label{bc}\\
&&u(x,0)=0,\quad {x\in\Omega},\label{ic}
\eea
where $m, \lambda$ are positive constants.
This problem arises in the study of modeling the dynamic deflection of an elastic membrane inside a
micro-electro mechanical system~(MEMS).
The full model is
\be\label{mod}
\epsilon u_{tt}+u_t=\Delta u+\frac{\lambda g(x)}{(1-u)^2\Big(1+\alpha\int_{\Omega}\frac{1}{1-u}dx\Big)^2},
\quad x\in \Omega, \; t>0,
\ee
where $\epsilon$ is the ratio of the interaction due to the inertial and damping terms,
$\lambda$ is {proportional} to the applied voltage, $u$ is the deflection of the membrane
{(the natural physical dimension being thus $n=2$).}
The function $g(x)$, {called the permittivity profile}, represents varying dielectric properties of the membrane.
One of physically suggested dielectric profiles is the power-law profile
$g(x)=|x|^m$ with $m>0$.
The integral in (\ref{mod}) arises due to the fact that the device is embedded
in an electrical circuit with a capacitor of fixed capacitance.
The parameter $\alpha$ denotes the ratio of this fixed capacitance to a reference capacitance of the device.
{As for the initial condition (\ref{icGen}), it means that the membrane has initially no deflection,
the voltage being switched on at $t=0$.}
For the details of background and derivation of this model, we refer the reader to \cite{PT01, JAP-DHB02, FMPS07}.

{The case when $\epsilon=0$ is studied in \cite{GHW08,GK,G-s} for $\alpha>0$ and $f$ a constant.
We shall here concentrate on the case when $\epsilon=\alpha=0$ (so that there is no capacitor in the circuit)
and $f$ is nonconstant.}
It  has been studied extensively for past years, see, e.g., the works \cite{G1, YG-ZP-MJW06, GG08, G08, G08A,  KMS08,YZ10}.
For the study of stationary solutions, we refer to \cite{L89, G97, EGG07,GG07,E08, EG08, GW08, KMS08,YZ10}.

\medskip

By the standard parabolic theory, there exists a unique classical solution of (\ref{pdeGen})-(\ref{icGen}) in a short time interval.
Also, by the strong maximum principle, we have $u>0$ in {$\Omega$ for $t>0$.}
Moreover, the solution $u$ of (\ref{pdeGen})-(\ref{icGen}) can be continued as long as $\max_{x\in\overline\Omega}u(x,t)<1$.
We shall let $[0,T)$ be the maximal existence time interval of $u$, where $T\le\infty$.
If $T<\infty$, then quenching occurs in finite time, i.e.
$$\limsup_{t\to T^-}\ \{\max_{x\in\overline\Omega}u(x,t)\}=1.$$
It is well known {(see, e.g., \cite{EGG,KMS08} and the references therein)} 
that the solution of (\ref{pdeGen})-(\ref{icGen}) quenches in finite time when $\lambda$ is sufficiently large.
A point $x=x_0$ is a {\it quenching point} if there exists a sequence $\{(x_n,t_n)\}$
in $\overline\Omega\times(0,T)$ such that
$$\hbox{$x_n\to x_0$, $t_n\uparrow T$ and $u(x_n,t_n)\to 1$ as $n\to\infty$.}$$
The set of all quenching points is called the quenching set, denoted by $\mathcal{Q}$.
In the context of MEMS, quenching corresponds to a touchdown phenomenon.

\medskip

{Note that in the typical case of \eqref{pde}, there is no source at $x=0$ due to the spatially dependent coefficient $|x|^m$.}
A long-standing open problem, {even in one space dimension,} is to determine whether or not $x=0$ is a quenching point.
More generally, for problem (\ref{pdeGen})-(\ref{icGen}), the question is whether a point $x_0$ such that $f(x_0)=0$
can be a quenching point.
In \cite{GG08,G08}, under the assumption that the quenching set is a compact subset of~${\Omega}$,
it is shown that {$x_0$ is not a quenching point if $f(x_0)=0$.}
On the other hand, {\cb the compactness assumption was proved in \cite{G08} by adapting a moving plane argument from \cite{FM,G1}}
when $f$ is constant or, more generally, when $f$ is nonincreasing as one approaches the boundary.
However, for {the typical} problem (\ref{pde})-(\ref{ic}) it is unknown whether the quenching set is compact.
Actually, supported by numerical evidence provided in \cite{YG-ZP-MJW06}, the following conjecture was made
(see~\cite{GG08,G08, EGG}):

\medskip
{\bf Conjecture.} {\it The point $x=0$ is not a quenching point for problem (\ref{pde})-(\ref{ic}).}
\medskip

{\bf In the present paper, we give an affirmative answer to this conjecture,
as well as for the case of general $f$,  in any space dimension.}
Our main result is the following.

\begin{theorem}  \label{MainThm}
Assume (\ref{Hypfp}) and let the solution $u$ of problem (\ref{pdeGen})-(\ref{icGen})
be such that $T<\infty$.
If $x_0\in  \Omega$ is such that $f(x_0)=0$, then $x_0$ is not a quenching point.
\end{theorem}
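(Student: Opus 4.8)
The plan is to reduce the statement to a \emph{type~I estimate on the quenching rate} — the new ingredient — by localizing near $x_0$ and comparing quenching rates. Write $v:=1-u>0$, so that $v_t=\Delta v-f(x)v^{-p}$, $v(\cdot,0)\equiv1$, and $v=1$ on $\partial\Omega$. First I would record that $u_t\ge0$ on $\Omega\times[0,T)$: the function $w:=u_t$ solves $w_t=\Delta w+pf(x)(1-u)^{-p-1}w$, vanishes on $\partial\Omega$, equals $f\ge0$ at $t=0$, and has zeroth-order coefficient bounded on $\overline\Omega\times[0,t_1]$ for each $t_1<T$; the maximum principle then gives $w\ge0$ on $\overline\Omega\times[0,t_1]$, hence on $\overline\Omega\times[0,T)$ (equivalently, compare $u$ with its time-translates). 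Consequently $v(x,\cdot)$ is nonincreasing, $v(x,t)\downarrow v^*(x):=1-u^*(x)\ge0$, and $M(t):=\min_{\overline\Omega}v(\cdot,t)=1-\max_{\overline\Omega}u(\cdot,t)$ is nonincreasing in $t$.

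The key ingredient I would then establish is a type~I lower bound on the quenching rate: there exist $c_0>0$ and $t_0\in(0,T)$ with
\[
1-u(x,t)\ \ge\ c_0\,(T-t)^{1/(p+1)},\qquad x\in\overline\Omega,\ t\in[t_0,T),
\]
equivalently $M(t)\ge c_0(T-t)^{1/(p+1)}$: quenching never occurs faster than the self-similar rate, with a constant $c_0$ independent of the quenching set. This is exactly where the compactness hypothesis of earlier work is to be circumvented: the obvious Friedman--McLeod auxiliary function $u_t-\varepsilon(1-u)^{-p}$ is negative on $\partial\Omega$, so the maximum principle does not close unless one already knows that $u$ stays away from $1$ near $\partial\Omega$. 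I would instead prove the estimate by a blow-up (rescaling) argument combined with a Liouville-type theorem for the limiting ancient/eternal problem, paying attention to the case where the rescaling centers approach $\partial\Omega$. I expect this to be the main obstacle and the technical core of the paper.

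Granting this, I would conclude as follows. Suppose for contradiction that $f(x_0)=0$ and $x_0\in\mathcal{Q}$. By continuity of $f$ choose $\rho>0$ with $\overline{B_\rho(x_0)}\subset\Omega$ and $\varepsilon:=\sup_{\overline{B_\rho(x_0)}}f<c_0^{\,p+1}/(p+1)$. Put $m(t):=\min_{x\in\overline{B_\rho(x_0)}}v(x,t)$. Since $x_0\in\mathcal{Q}$ there is a sequence $(x_n,t_n)\to(x_0,T)$ with $u(x_n,t_n)\to1$; as $x_n\in B_\rho(x_0)$ for large $n$ and $m$ is nonincreasing, $m(t)\downarrow0$ as $t\to T$. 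For a.e.\ $t<T$ at which $m$ is differentiable, picking a minimizer $\xi(t)$, a one-sided difference-quotient argument gives $m'(t)=v_t(\xi(t),t)$; and for $t$ close enough to $T$ the minimizer lies in the open ball $B_\rho(x_0)$, so $\xi(t)$ is an interior local minimum of $v(\cdot,t)$ and $\Delta v(\xi(t),t)\ge0$ (handling the borderline situation where $m(t)$ is approached on $\partial B_\rho(x_0)$ — which would force a quenching point onto that sphere, where $f$ is again arbitrarily small — needs only a routine variant). Using $m(t)=v(\xi(t),t)$ and $f(\xi(t))\le\varepsilon$,
\[
m'(t)=\Delta v(\xi(t),t)-f(\xi(t))\,v(\xi(t),t)^{-p}\ \ge\ -\varepsilon\,m(t)^{-p},
\]
hence $\tfrac{d}{dt}\bigl(m(t)^{p+1}\bigr)\ge-(p+1)\varepsilon$ a.e.\ near $T$; integrating over $(t,T)$ and using $m(t)\downarrow0$ yields $m(t)^{p+1}\le(p+1)\varepsilon(T-t)$. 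On the other hand $m(t)\ge M(t)\ge c_0(T-t)^{1/(p+1)}$, i.e.\ $m(t)^{p+1}\ge c_0^{\,p+1}(T-t)$. Comparing the two inequalities gives $c_0^{\,p+1}\le(p+1)\varepsilon$, contradicting the choice of $\rho$. Hence $x_0\notin\mathcal{Q}$, which is the assertion. $\square$

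Finally, a remark on the mechanism: the role of $f(x_0)=0$ is precisely that $\varepsilon=\sup_{\overline{B_\rho(x_0)}}f\to0$ as $\rho\to0$, so any quenching localized near $x_0$ would be forced to proceed with an arbitrarily small rate constant, in conflict with the fixed constant $c_0$ of the global type~I bound; at an interior point where $f$ does not vanish the argument correctly breaks down. Only continuity of $f$ enters the localization step; the H\"older hypothesis on $f$ would be the natural thing to use instead if one preferred to exploit the type~I estimate through a self-similar rescaling directly centered at $x_0$.
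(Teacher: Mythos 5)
Your high-level strategy matches the paper's: first a type~I quenching-rate lower bound, then a localization near $x_0$ exploiting the smallness of $f$ there. A minor point is that the estimate you posit, $1-u(x,t)\ge c_0(T-t)^{1/(p+1)}$ uniformly on $\overline\Omega$, is stronger than what the paper's Theorem~2 actually delivers, namely $1-u(x,t)\ge\gamma\,\delta(x)\,(T-t)^{1/(p+1)}$, which degenerates as $x\to\partial\Omega$; the uniform version is essentially an open question about boundary touchdown. This is repairable in your setting, since you only invoke the bound on the compact ball $\overline{B_\rho(x_0)}$, where the weighted estimate gives $m(t)\ge \gamma\bigl(\delta(x_0)-\rho\bigr)(T-t)^{1/(p+1)}$ with a constant bounded away from zero as $\rho\to0$, so there is no circularity with the subsequent choice of $\rho$. (The paper proves that estimate by a modified Friedman--McLeod function $J=u_t-\eps\,a(x)h(u)$ with a weight $a\sim\delta^{p+1}$, not by rescaling plus a Liouville theorem; but since you treat Theorem~2 as a black box, that is a separate matter.)

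The genuine gap is in the minimum-ODE step. To get $m'(t)\ge-\varepsilon\,m(t)^{-p}$ you need $\Delta v(\xi(t),t)\ge0$, which requires the minimizer $\xi(t)$ of $v(\cdot,t)$ over $\overline{B_\rho(x_0)}$ to lie in the \emph{open} ball; when $\xi(t)\in\partial B_\rho(x_0)$ the Laplacian has no sign and the differential inequality fails exactly where you need it (on the bad set you only know $\tfrac{d}{dt}m^{p+1}\le0$, the wrong sign). You wave this away as a ``routine variant'', but I do not see one: observing that persistent boundary minimizers would ``force a quenching point onto that sphere where $f$ is small'' merely recreates the same problem at a new center, and enlarging the ball or taking an envelope over radii runs into the same obstruction. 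Pinning the minimizer in the interior is precisely a compactness statement about the quenching set, which is the very thing unavailable here. The paper sidesteps this entirely with an explicit local supersolution $w(x,t)=1-A\bigl[\phi(x)+(T-t)\bigr]^{1/(p+1)}$, where $\phi$ is a quartic bump vanishing on $\partial B(x_0,b)$: the type~I estimate controls $u$ on $\partial B(x_0,b)\times(0,T)$, the smallness of $f$ on $B(x_0,b)$ together with a small $\kappa$ makes $w$ a supersolution, and comparison gives $u\le w<1$ on $\overline B(x_0,b/2)$ with no tracking of minimizers. I would replace your ODE argument with a barrier of this kind.
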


In particular, as a special case, we have that $0$ is not a quenching point for problem (\ref{pde})-(\ref{ic}).
Actually, we have been able to answer this question {\it independently} of the compactness issue of quenching set.
In fact, as a key-step -- of independent interest -- to the proof of Theorem~1,
we prove the following estimate, which in particular guarantees that
the quenching rate is of {\it type I} on any compact subset of $\Omega$.
{In what follows, we denote
$$\delta(x)={\rm dist}(x, \partial\Omega),\quad x\in\overline\Omega,$$
the function distance to the boundary.}

\begin{theorem} \label{MainProp}
Assume (\ref{Hypfp}) and let the solution $u$ of problem (\ref{pdeGen})-(\ref{icGen})
be such that $T<\infty$.
Then there exists a constant $\gamma>0$ {\cb (independent of $x,t$)} such that
\be\label{rate}
1-u(x,t)\ge \gamma\,  \delta(x)\,(T-t)^{1/(p+1)},\quad  x\in \Omega,\ 0<t<T.
\ee
\end{theorem}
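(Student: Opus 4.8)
The plan is to work with $v:=1-u$, which solves $v_t=\Delta v-f(x)v^{-p}$ with $v=1$ on $\partial\Omega$ and $v(\cdot,0)\equiv 1$, and with the auxiliary quantity
\[
K:=(1-u)^{p}\,u_t=v^{p}u_t=-\tfrac1{p+1}\,\partial_t\!\big[(1-u)^{p+1}\big].
\]
Once we show $K\ge\varepsilon\,\Phi(x)$ on $\Omega\times(\tau,T)$ for a suitable $\varepsilon>0$, a suitable $\tau\in(0,T)$ and a function $\Phi$ with $\Phi\ge c\,\delta^{\,p+1}$ on $\overline\Omega$ and $\Phi=0$ on $\partial\Omega$, the estimate follows: fixing $x$ and integrating $\partial_t[(1-u)^{p+1}]=-(p+1)K\le-(p+1)\varepsilon\Phi(x)$ from $t$ up to $T$ gives $(1-u(x,t))^{p+1}\ge(p+1)\varepsilon\Phi(x)(T-t)$, whence \eqref{rate} on $[\tau,T)$ after taking $(p+1)$-th roots; for $t\le\tau$ one simply uses that $t\mapsto v(x,t)$ is nonincreasing (because $w:=u_t\ge0$, by the maximum principle applied to $w_t=\Delta w+pfv^{-p-1}w$ with data $w(\cdot,0)=f\ge 0$), so $v(x,t)\ge v(x,\tau)\ge\min_{\overline\Omega}v(\cdot,\tau)>0$, which dominates $\gamma\,\delta(x)(T-t)^{1/(p+1)}$ after shrinking $\gamma$.

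The first step is an identity: differentiating the equation for $v$ in $t$ and substituting, one finds, after several cancellations, that $K$ solves the \emph{homogeneous} linear parabolic equation
\[
K_t-\Delta K+2p\,\frac{\nabla v}{v}\cdot\nabla K-p(p+1)\,\frac{|\nabla v|^{2}}{v^{2}}\,K=0\qquad\text{in }\Omega\times(0,T),
\]
with $K=0$ on $\partial\Omega\times(0,T)$ (since $u_t=0$ there) and $K\ge 0$. On each slab $\overline\Omega\times[\tau,T']$, $T'<T$, its coefficients are bounded — parabolic regularity gives $v,\nabla v$ bounded while $v$ stays bounded away from $0$ — so the comparison principle applies there; thus any (classical or viscosity) subsolution $\chi$ with $\chi\le K$ on the parabolic boundary of $\Omega\times(\tau,T')$ satisfies $\chi\le K$ throughout, and letting $T'\uparrow T$ gives $\chi\le K$ on $\overline\Omega\times[\tau,T)$.

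The heart of the argument — and the step I expect to be the main obstacle — is to construct the right time-independent subsolution $\chi=\varepsilon\Phi$. Completing the square in the first-order term shows that $\Phi>0$ is a subsolution as soon as $\Delta\Phi\ge\frac{p}{p+1}\frac{|\nabla\Phi|^{2}}{\Phi}$, i.e.\ as soon as $\Psi:=\Phi^{1/(p+1)}$ is subharmonic where positive. No nonnegative such $\Psi$ can vanish on $\partial\Omega$, so $\Phi$ cannot be a subsolution globally; but a constant is trivially one in the interior, and near $\partial\Omega$ one may take $\Psi=\delta+\kappa\delta^{2}$, which is subharmonic on $\{\delta<\eta\}$ once $\kappa$ is large (depending on $\sup_{\{\delta<\eta\}}|\Delta\delta|$), using $|\nabla\delta|\equiv1$ there — the correction $\kappa\delta^{2}$ is genuinely needed, as $\Psi=\delta$ alone fails on domains that are not mean-convex (already on a ball). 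Gluing the pieces, e.g.\ $\Phi:=\min\{\varepsilon_1(\delta+\kappa\delta^{2})^{p+1},\,\varepsilon_2\}$ — a viscosity subsolution since the minimum of subsolutions is one — gives $\Phi\ge c\,\delta^{\,p+1}$, $\Phi\le C\delta$ and $\Phi=0$ on $\partial\Omega$. It remains to arrange the boundary ordering $\varepsilon\Phi\le K$: on $\partial\Omega$ both vanish, while at $t=\tau$ the strong maximum principle gives $u_t(\cdot,\tau)>0$ in $\Omega$ and $u_t(\cdot,\tau)=0$ on $\partial\Omega$, so Hopf's lemma yields $u_t(\cdot,\tau)\ge c_\tau\delta$, hence $K(\cdot,\tau)=v^{p}u_t(\cdot,\tau)\ge c'_\tau\delta\ge\varepsilon\Phi$ for $\varepsilon$ small. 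Starting at $\tau>0$ rather than $t=0$ is essential, since $K(\cdot,0)=f$ vanishes wherever $f$ does; and reducing to a single fixed $\tau$ (treating $t\le\tau$ trivially) is what makes the dependence of $c_\tau$ on $\tau$ harmless. Apart from this, the only delicate points are the degeneracy of the $K$-equation as $t\to T$ (absorbed by the slab-by-slab comparison) and insisting on the exact exponent $p+1$ on $\delta$ in $\Phi$: any larger exponent $k$ would yield only $\delta(x)^{k/(p+1)}$ with $k/(p+1)>1$, which is strictly weaker than \eqref{rate} near $\partial\Omega$.
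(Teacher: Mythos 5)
Your reformulation via $K:=(1-u)^p u_t=v^p u_t$ is attractive and the computations you do carry out are correct: the homogeneous equation
$K_t-\Delta K+2p\,v^{-1}\nabla v\cdot\nabla K-p(p+1)\,v^{-2}|\nabla v|^2 K=0$
checks out, and, after completing the square, reducing the steady subsolution condition
$-\Delta\Phi+2p\,v^{-1}\nabla v\cdot\nabla\Phi-p(p+1)\,v^{-2}|\nabla v|^2\Phi\le 0$
to $\Delta\bigl(\Phi^{1/(p+1)}\bigr)\ge 0$ is a clean observation. You also correctly notice the obstruction: no nonnegative $\Psi$ with $\Psi=0$ on $\partial\Omega$ can be subharmonic in all of $\Omega$.

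The gluing step, however, is false, and it is the crux of the argument. The minimum of two viscosity \emph{sub}solutions is not a subsolution (it is the \emph{maximum} that is; the minimum works for supersolutions). Concretely, $\Phi(x)=\min(1-x,1)$ near $x=0$ is not a viscosity subsolution of $-\Phi''\le 0$: the smooth function $\varphi(x)=1-\tfrac{1}{2}x-x^2$ satisfies $\varphi\ge\Phi$ near $0$, $\varphi(0)=\Phi(0)$, yet $\varphi''(0)=-2<0$. Exactly the same concave corner appears along the crossing set $\{\varepsilon_1(\delta+\kappa\delta^2)^{p+1}=\varepsilon_2\}$ of your $\Phi$, so it fails the subsolution test there, and no choice of $\varepsilon_1,\varepsilon_2,\kappa$ removes the corner, since, as you note, a globally subharmonic $\Psi$ vanishing on $\partial\Omega$ cannot exist.

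The paper's proof is structurally the same as yours but carries an extra degree of freedom precisely to sidestep this. Observe that $K\ge\varepsilon\Phi$ is equivalent to nonnegativity of $J=u_t-\varepsilon\,\Phi(x)\,(1-u)^{-p}$, i.e. the paper's $J=u_t-\varepsilon\,a(x)\,h(u)$ with the unperturbed $h(u)=(1-u)^{-p}$. The paper instead takes $h(u)=(1-u)^{-p}+(1-u)^{-q}$ with $0<q<p$; this perturbation produces a strictly positive source $(p-q)a(x)f(x)(1-u)^{-p-q-1}$ in the identity \eqref{defJR}, available wherever $f>0$. They then choose $a=\phi^{p+1}$ with $\phi$ harmonic on $\Omega\setminus\overline B(x_0,\rho)$, equal to $1$ on $\partial B(x_0,\rho)$ and $0$ on $\partial\Omega$, which gives the exact identity $a\Delta a=\frac{p}{p+1}|\nabla a|^2$ there (the precise analogue of your subharmonicity condition), and inside $B(x_0,\rho)$, where $f$ is uniformly positive, the extra source term plus a level-set split in $u$ closes the estimate. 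Your $K$-formulation, having dropped the $(1-u)^{-q}$ term, has no counterpart to that source and is therefore forced into the impossible global subsolution; to make your approach work, you would need to re-introduce an analogous perturbation, at which point the argument collapses back to the paper's.
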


{Theorem~\ref{MainProp} will be proved via a nontrivial modification of the Friedman-McLeod method (\cite{FM}, see also \cite{G1}).
Once Theorem~\ref{MainProp} is proved, Theorem~\ref{MainThm} will be deduced by constructing a suitable local supersolution.}

\medskip
The compactness of the quenching set remains an open question.
In particular, we do not know if Theorem~\ref{MainThm} remains true if $f(x_0)=0$ with  $x_0\in\partial\Omega$
(in other words, can a zero {\it boundary} point of the permittivity profile
 be a quenching point ?).
As mentioned above, this cannot occur if we assume in addition that $f$
is nonincreasing as one approaches the boundary.
{Actually, as a consequence of Theorem~\ref{MainProp} and of suitable comparison arguments, we
have been able to obtain two further criteria for the quenching set to be compact.
We note that, unlike in the aforementioned criterion, we do not require any convexity of the domain~$\Omega$.}

\begin{theorem} \label{BoundaryResult}
Assume (\ref{Hypfp}) and let the solution $u$ of problem (\ref{pdeGen})-(\ref{icGen})
be such that $T<\infty$. Assume either
\be\label{HypBdry1}
0<p<1,
\ee
or
\be\label{HypBdry}
f(x)=o\bigl(\delta^{p+1}(x)\bigr)\qh{ as } \delta(x)\to 0.
\ee
Then quenching does not occur near the boundary, i.e. $\mathcal{Q}\subset\Omega$.
\end{theorem}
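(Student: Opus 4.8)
The plan is to prove that, under either hypothesis, $u$ stays uniformly bounded away from $1$ on a boundary strip $\{\delta(x)<\eta\}$ for some $\eta>0$; since a quenching point $x_0$ with $\delta(x_0)<\eta$ would contradict this along any approximating sequence, this gives $\mathcal{Q}\subset\{\delta\ge\eta\}\subset\Omega$. The key reduction is that Theorem~\ref{MainProp} linearizes the problem: from $(1-u(x,t))^{-p}\le\gamma^{-p}\delta(x)^{-p}(T-t)^{-p/(p+1)}$ one gets that $u$ is a subsolution of $z_t=\Delta z+g$ in $\Omega\times(0,T)$, $z=0$ on $\partial\Omega$, $z(\cdot,0)=0$, with
\[
g(x,t):=\begin{cases}\|f\|_{L^\infty}\,\gamma^{-p}\,\delta(x)^{-p}\,(T-t)^{-p/(p+1)}&\text{under }\eqref{HypBdry1},\\[1mm] C\,\delta(x)\,(T-t)^{-p/(p+1)}&\text{under }\eqref{HypBdry};\end{cases}
\]
in the second case one uses $f(x)\le C'\delta(x)^{p+1}$ near $\partial\Omega$ (a consequence of \eqref{HypBdry}) and $f\le\|f\|_{L^\infty}$ away from it. By the comparison principle for this \emph{linear} equation --- so the singular nonlinearity no longer intervenes --- it suffices to exhibit a bounded supersolution $\bar z$ of it vanishing like a positive power of $\delta(x)$ near $\partial\Omega$: then $u\le\bar z$ forces $u<1$ in a boundary strip.

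Under \eqref{HypBdry} I would take $\bar z(x,t):=\varphi_1(x)\,q(t)$, where $(\varphi_1,\lambda_1)$ is the first Dirichlet eigenpair of $-\Delta$ on $\Omega$ (so $\varphi_1>0$ in $\Omega$, $\varphi_1=0$ on $\partial\Omega$, and, by Hopf's lemma and elliptic regularity, $c_1\delta\le\varphi_1\le c_2\delta$ on $\overline\Omega$), and
\[
q(t):=\frac{C}{c_1}\,e^{-\lambda_1 t}\!\int_0^t e^{\lambda_1 s}(T-s)^{-p/(p+1)}\,ds\ \le\ \frac{C}{c_1}\!\int_0^T(T-s)^{-p/(p+1)}\,ds\ <\ \infty,
\]
the integral converging since $p/(p+1)<1$. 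Then $\bar z_t-\Delta\bar z=\varphi_1\bigl(q'+\lambda_1q\bigr)=\tfrac{C}{c_1}\varphi_1(T-t)^{-p/(p+1)}\ge C\delta(T-t)^{-p/(p+1)}=g$, while $\bar z=u=0$ on $\partial\Omega$ and $\bar z(\cdot,0)=u(\cdot,0)=0$. Hence $u\le\bar z\le\bigl(c_2\sup q\bigr)\delta(x)$, uniformly in $t$, so $u<1$ wherever $\delta(x)<\bigl(c_2\sup q\bigr)^{-1}$.

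Under \eqref{HypBdry1} the forcing is more singular at $\partial\Omega$ and a time-independent spatial weight no longer suffices; I would instead take $\bar z(x,t):=\Lambda\,\varphi_1(x)^{1-p}\,q(t)$ with $\Lambda>0$ large and $q$ bounded. A direct computation gives
\[
\bar z_t-\Delta\bar z=\Lambda\Bigl[\varphi_1^{1-p}q'+(1-p)\lambda_1\varphi_1^{1-p}q+p(1-p)\,\varphi_1^{-p-1}|\nabla\varphi_1|^2\,q\Bigr],
\]
all three terms being $\ge0$ when $q,q'\ge0$. Using $|\nabla\varphi_1|\ge c_0>0$ near $\partial\Omega$ (Hopf) and the elementary bound $\delta^{-p}\le\tfrac12\bigl[(T-t)^{-p/(p+1)}\delta^{1-p}+(T-t)^{p/(p+1)}\delta^{-p-1}\bigr]$ (arithmetic--geometric mean applied to $\delta^{1-p}$ and $\delta^{-p-1}$), matching the two pieces of $g$ against the $q'$-term and the $\varphi_1^{-p-1}$-term reduces everything to arranging $q'(t)\ge c_*(T-t)^{-2p/(p+1)}$ and $q(t)\ge c_*$ for a suitable $c_*>0$. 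One may thus take $q(t):=c_*\bigl(1+\int_0^t(T-s)^{-2p/(p+1)}\,ds\bigr)$, which is \emph{bounded} exactly because $2p/(p+1)<1$, i.e.\ because $p<1$ --- this is the sole place hypothesis \eqref{HypBdry1} enters. Taking $\Lambda$ large additionally secures the supersolution inequality on $\{\delta(x)\ge\eta\}$, where $g\le c\,(T-t)^{-p/(p+1)}$ and $\varphi_1$ is bounded below. One then concludes $u\le\bar z\le\Lambda\,c_2^{\,1-p}(\sup q)\,\delta(x)^{1-p}$, uniformly in $t<T$, whence $u<1$ near $\partial\Omega$.

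The crux --- and the main obstacle --- is the joint singularity of $g$ at the parabolic corner $\partial\Omega\times\{T\}$: one must split the boundary singularity $\delta^{-p}$ into two powers of $\delta$ that can be matched, respectively, against a $q'$-term (to be integrated to a \emph{bounded} $q$) and a zeroth-order $q$-term, and the arithmetic of the exponents then decides whether a bounded supersolution exists --- always under \eqref{HypBdry} (because $p/(p+1)<1$), and under \eqref{HypBdry1} if and only if $2p/(p+1)<1$. As a fallback I would re-derive the bound on the linear problem directly from the Duhamel formula, invoking the Gaussian estimates for the Dirichlet heat kernel of the $C^{2+\nu}$ domain $\Omega$, whose $\bigl(1\wedge\delta(x)/\sqrt\tau\bigr)$ factors encode the boundary damping; this yields $\sup_{t<T}z(x,t)\to0$ as $\delta(x)\to0$ under the same exponent conditions.
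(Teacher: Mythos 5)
Your proposal is correct, but it takes a genuinely different route from the paper, particularly for the case $0<p<1$. You linearize via Theorem~\ref{MainProp} in \emph{both} cases, viewing $u$ as a subsolution of $z_t=\Delta z+g$ with the explicit forcing $g(x,t)\le \|f\|_\infty\gamma^{-p}\delta^{-p}(x)(T-t)^{-p/(p+1)}$ (sharpened to $C\delta(x)(T-t)^{-p/(p+1)}$ under \eqref{HypBdry}), and then build a bounded supersolution of this \emph{linear} equation out of the principal eigenfunction $\varphi_1$ and a bounded time-weight $q$, with the AM--GM split of $\delta^{-p}$ and the exponent arithmetic ($2p/(p+1)<1$) making transparent where $p<1$ enters. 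The paper, by contrast, never linearizes: in case~(ii) it builds a supersolution $w=1-\gamma\eta[\phi+(T-t)]^{1/(p+1)}$ of the full nonlinear problem on a boundary collar $\Omega_\eta$ (with $\phi=k\psi_\eta^2$, $\psi_\eta$ harmonic), invoking \eqref{rate} only to secure the ordering on the inner boundary $\Gamma_\eta$; and in case~(i) it avoids Theorem~\ref{MainProp} altogether, using a purely local \emph{stationary} supersolution $z(x)=1-C(d-|x|)^{2/(p+1)}$ of the nonlinear elliptic inequality near an exterior ball at a boundary point. Your route has the virtue of unifying the two cases under a single linear-comparison framework and exposes precisely which power balance the hypotheses control; the paper's route for case~(i) is more elementary (no type~I estimate, no global eigenfunction) and localizes at a boundary point. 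One technical point worth making explicit in your case~(i): $\bar z=\Lambda\varphi_1^{1-p}q$ is only continuous up to $\partial\Omega$ (its gradient blows up like $\delta^{-p}$), so the comparison should be stated for continuous sub/supersolutions that are $C^{2,1}$ in the interior, or run on $\{\delta>\epsilon\}\times(0,\tau)$ and passed to the limit; once noted, this is harmless.
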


{Going back to MEMS modeling, it seems that {\cb Theorem~\ref{MainThm} and the case of \eqref{HypBdry} in Theorem~\ref{BoundaryResult}}
may be of some importance in applications,
at least from the qualitative point of view (see \cite{YG-ZP-MJW06, EGG} for more details).
This is especially true for particular devices of MEMS type such as micro-valves, where the touchdown behavior is explicitly exploited,
since the touchdown or quenching set then corresponds to the lid or closing area of the valve.
As a consequence of our results, we see that  the latter has to be part of the positive set of the function $f$.
The choice of $f$, through an appropriate repartition of the dielectric coating, can thus be used in the optimal design of the microvalve.
In this respect, it would be desirable to gain further information about the structure of the quenching set,
but this seems a difficult mathematical problem for nonconstant $f$, even in one space dimension.
}

\medskip

{{\bf Remark.}} We point out that Theorems~\ref{MainThm}--\ref{BoundaryResult} still hold if we replace {\cb as in \cite{KMS08}}
the zero initial data by a nonnegative
$C^2$ function $u_0$ such that $u_0<1$ in $\overline\Omega$, $\Delta u_0+f(x)(1-u_0)^{-p}\ge 0$ in $\Omega$
and $u_0=0$ on $\partial\Omega$.
{Indeed, this assumption guarantees that $u_t>0$ and the proofs can then be modified
in a straightforward way.}


\section{Proof of Theorem 2}\label{point}
\setcounter{equation}{0}

\subsection{General strategy and basic computation}

When the compactness of the quenching set is known, type I estimates can
be proved by means of the maximum principle applied, {in a strict subdomain of $\Omega$,}
 to the well-known auxiliary function (cf. \cite{FM, G1}):
$$J(x,t):=u_t-\eps(1-u)^{-p},$$
where $\eps$ is a small positive constant.
In the present situation, the possible noncompactness of the quenching set prevents one to verify that
$J\ge 0$ on the boundary of any subdomain of $\Omega$
and the method is not directly applicable.

To overcome this, our basic idea is to consider a modified function $J$ as follows:
\be \label{defJah}
J{\cb (x,t)}=u_t-\eps a(x)h(u),
\ee
where $a(x)$ is an auxiliary function such that $a=0$ on $\partial\Omega$, hence also $J=0$.
The construction is delicate and requires specific properties for $a$, which will be given later.
As for the function $h(u)$, it will be a perturbation of the nonlinearity,
namely
\be \label{defhJ}
h(u)=(1-u)^{-p}+
(1-u)^{-q},\quad 0<q<p.
\ee
Before specializing, we first present the basic computations.

\begin{lemma} \label{lem1}
Let $J, h$ be given by (\ref{defJah})-(\ref{defhJ}), where $a\in C^2(\Omega)$ is a nonnegative function.
Then
\be\label{identJR}
J_t- \Delta J -pf(x)(1-u)^{-p-1}J = \eps R,
\ee
where
\be\label{defJR}
R=
(p-q)a(x)f(x)(1-u)^{-p-q-1}
+ah''(u)|\nabla u|^2
+2h'(u)\nabla a\cdot\nabla u+h(u)\Delta a.
\ee
Moreover, {$h''>0$ and,} at any point $x\in\Omega$ such that $a(x)>0$, we have
\be\label{estimJR}
R\ge
\underbrace{(p-q)a(x)f(x)(1-u)^{-p-q-1}}_{{\mathcal{T}}_1}
+\underbrace{h(u)\Delta a}_{{\mathcal{T}}_2}
-\underbrace{\frac{{h'}^2(u)|\nabla a|^2}{ah''(u)}}_{{\mathcal{T}}_3}.
\ee
\end{lemma}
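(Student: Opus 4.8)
The plan is to establish the parabolic identity \eqref{identJR} by a direct computation, then derive the differential inequality \eqref{estimJR} by a pointwise algebraic estimate of the gradient cross-term. First I would compute $J_t-\Delta J$ starting from the definition $J=u_t-\eps a(x)h(u)$. Since $u$ solves \eqref{pdeGen}, the function $v:=u_t$ satisfies the linearized equation $v_t=\Delta v+pf(x)(1-u)^{-p-1}v$ (differentiate \eqref{pdeGen} in $t$). For the second piece $w:=a(x)h(u)$, I would expand $w_t=a\,h'(u)u_t$ and, via the product and chain rules, $\Delta w = h(u)\Delta a + 2h'(u)\nabla a\cdot\nabla u + a\bigl(h''(u)|\nabla u|^2 + h'(u)\Delta u\bigr)$. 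Subtracting and using $\Delta u = u_t - f(x)(1-u)^{-p}$ to eliminate $\Delta u$, the terms $a h'(u)u_t$ cancel against part of $w_t$, and one is left with $J_t-\Delta J - pf(x)(1-u)^{-p-1}J$ equal to $\eps$ times the sum of $a h'(u)f(x)(1-u)^{-p}$ coming from the $\Delta u$ substitution, minus the stray zeroth-order term $-pf(x)(1-u)^{-p-1}\cdot\eps a h(u)$, plus $a h''(u)|\nabla u|^2 + 2h'(u)\nabla a\cdot\nabla u + h(u)\Delta a$. The only point requiring care is checking that the combination of the two source-type contributions, $h'(u)(1-u)^{-p} - p(1-u)^{-p-1}h(u)$, collapses to $(p-q)(1-u)^{-p-q-1}$; with $h(u)=(1-u)^{-p}+(1-u)^{-q}$ one has $h'(u)=p(1-u)^{-p-1}+q(1-u)^{-q-1}$, and the $p$-only terms cancel while the mixed terms leave exactly $(q-p)(1-u)^{-p-q-1}$ — wait, one must track the sign through the subtraction $J=u_t-\eps a h$, which flips it to $(p-q)(1-u)^{-p-q-1}$ as claimed. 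This yields \eqref{defJR}.

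Next I would verify $h''>0$: directly, $h''(u)=p(p+1)(1-u)^{-p-2}+q(q+1)(1-u)^{-q-2}>0$ for $0<q<p$ and $u<1$, so the denominator in $\mathcal{T}_3$ is legitimate wherever $a(x)>0$. For the estimate \eqref{estimJR}, the only term in $R$ not of a definite sign is the cross-term $2h'(u)\nabla a\cdot\nabla u$. At a point where $a(x)>0$ I would bound it by completing the square: writing $2h'(u)\nabla a\cdot\nabla u = 2\bigl(\sqrt{a\,h''(u)}\,\nabla u\bigr)\cdot\bigl(h'(u)a^{-1/2}(h'')^{-1/2}\nabla a\bigr)$ — more plainly, for any $\eta>0$, $2h'(u)\nabla a\cdot\nabla u \ge -\eta a h''(u)|\nabla u|^2 - \eta^{-1}\frac{h'^2(u)|\nabla a|^2}{a h''(u)}$, and choosing $\eta=1$ the first term is absorbed by the $+a h''(u)|\nabla u|^2$ already present in $R$. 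Discarding the resulting nonnegative leftover $(1-\eta)a h''(u)|\nabla u|^2=0$ (or keeping $\eta$ slightly less than $1$ if one prefers a strict surplus), one is left with $R\ge \mathcal{T}_1+\mathcal{T}_2-\mathcal{T}_3$, which is precisely \eqref{estimJR}.

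I do not anticipate a genuine obstacle here — this lemma is the routine "basic computation" underpinning the Friedman–McLeod-type argument, and its difficulty is purely bookkeeping. The one place to be vigilant is the algebra collapsing the source terms into the factor $(p-q)$: it is essential that the coefficient come out positive (so that $\mathcal{T}_1\ge 0$), and this is exactly why the perturbed nonlinearity in \eqref{defhJ} is taken with exponent $q<p$ rather than $q>p$. The genuinely delicate work — the actual choice of the auxiliary function $a(x)$ (with $a=0$ on $\partial\Omega$, the right behavior like $\delta(x)$, and making $\mathcal{T}_1+\mathcal{T}_2-\mathcal{T}_3\ge 0$ so that the maximum principle applies to $J$) — is deferred to the subsequent parts of the proof of Theorem~\ref{MainProp} and is not part of this lemma.
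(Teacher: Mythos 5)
Your proposal is correct and follows essentially the same route as the paper: expand $J_t-\Delta J$ using the PDE and its time-derivative, substitute $u_t=J+\eps a h$ to extract the zeroth-order term, collapse the source contributions to the factor $(p-q)(1-u)^{-p-q-1}$, and then handle the cross term $2h'\nabla a\cdot\nabla u$ by completing the square (equivalently Young's inequality with unit weight) against $ah''|\nabla u|^2$. The paper organizes the first part slightly differently — writing $g(u)=(1-u)^{-p}$ so that $u_t-\Delta u=f g(u)$ and reducing the source algebra to the single combination $g'h-h'g$ — but the content and the final estimate are identical to yours.
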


\begin{proof}
We compute
\beaa
J_t&=&u_{tt}-\eps a(x)h'(u) u_t \\
\nabla J&=&\nabla u_t-\eps \bigl(a(x)h'(u)\nabla u+h(u)\nabla a(x)\bigr)\\
\Delta J &=&\Delta u_t-\eps \bigl(a(x)h'(u)\Delta u+a(x)h''(u)|\nabla u|^2+2h'(u)\nabla a(x)\cdot\nabla u+h(u)\Delta a(x)\bigr).
\eeaa
Setting $g(u)=(1-u)^{-p}$ and omitting the variables $x, u$ without risk of confusion, we get
\beaa
J_t-\Delta J
&=&(u_t-\Delta u)_t-\eps ah' (u_t-\Delta u)+\eps (ah'' |\nabla u|^2+2h'\nabla a\cdot\nabla u+h \Delta a), \\
\noalign{\vskip 1mm}
&=&f(x) g' u_t-\eps f(x) ah' g+\eps (ah'' |\nabla u|^2+2h'\nabla a\cdot\nabla u+h \Delta a).
\eeaa
Using $u_t=J+\eps ah$, we have
$$J_t- \Delta J -f(x) g' J
= \eps R,$$
where
$$R=f(x)a (g' h-h' g)+ ah'' |\nabla u|^2+2h'\nabla a\cdot\nabla u+h \Delta a.$$
On the other hand, we have
$$
\begin{array}{lll}
g'h-h' g
&=p(1-u)^{-p-1}\bigl((1-u)^{-p}+
(1-u)^{-q}\bigr)\cr
\noalign{\vskip 1mm}
&\ \ \ -(1-u)^{-p}\bigl(p(1-u)^{-p-1}+
q(1-u)^{-q-1}\bigr) \cr
\noalign{\vskip 1mm}
&=
(p-q)(1-u)^{-p-q-1},
\end{array}
$$
hence (\ref{defJR}).
Finally, {since $h''>0$,} for all $x\in \Omega$ such that $a(x)>0$, we may write
$$R=
(p-q)af(x)(1-u)^{-p-q-1} + h\Delta a+ah''\Bigl[|\nabla u|^2+2\frac{h'\nabla a\cdot\nabla u}{ah''}\Bigr],$$
hence (\ref{estimJR}).
\end{proof}

\subsection{Construction of the function $a(x)$}
We see that, in order to guarantee $R\ge 0$,
{the (negative) term ${\mathcal{T}}_3$ on the RHS of (\ref{estimJR}) must be absorbed by
a positive contribution coming either:
\smallskip

$\bullet$ from the term ${\mathcal{T}}_1$ (generated by the perturbation in (\ref{defhJ})), provided $f(x)>0$;  or
\smallskip

$\bullet$ from the term ${\mathcal{T}}_2$, provided $\Delta a(x)>0$.}
\smallskip

Since $a(x)$ is nonnegative and vanishes at the boundary, we cannot have $\Delta a>0$ everywhere.
Actually, we shall consider functions $a(x)$ which are positive in $\Omega$ and suitably convex everywhere except on a small ball $B$,
where $f$ is uniformly positive.
Also, it will be necessary to split the parabolic cylinder
$\Omega\times (T/2,T)$ into suitable sub-regions,
taking into account the ``large'' and ``small'' parts of
the function $u(x,t)$.

\medskip
The following essential lemma gives the construction of the appropriate function~$a(x)$.

\begin{lemma} \label{lem2}
Let $h$ be given by (\ref{defhJ}).
Let $x_0\in\Omega, \rho>0$ with $\overline B(x_0,\rho)\subset \Omega$,
and denote the open set
$$\Omega_{x_0,\rho}=\Omega\setminus \overline B(x_0,\rho).$$
Then there exists a function $a\in C^1(\overline\Omega)\cap C^2(\Omega)$ with the following properties:
\be\label{prop-ah1}
hh''a\Delta a-{h'}^2|\nabla a|^2\ge 0  \qh{for all $x\in\overline{\Omega_{x_0,\rho}}$ and all $0\le u<1$,}
\ee
\be\label{prop-ah2}
C_1\delta^{p+1}(x)\le a(x)\le C_{2}\delta^{p+1}(x) \qh{ for all $x\in\overline\Omega$},
\ee
for some constants $C_1, C_2>0$.
\end{lemma}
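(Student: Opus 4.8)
The plan is to construct $a$ in the special form $a=\psi^{p+1}$, where $\psi$ is a fixed function (independent of $u$) that is positive in $\Omega$, vanishes on $\partial\Omega$, and is \emph{harmonic outside the ball $B(x_0,\rho)$}. The reason for this choice is apparent from the identities $\nabla a=(p+1)\psi^{p}\nabla\psi$ and $\Delta a=(p+1)\psi^{p}\Delta\psi+p(p+1)\psi^{p-1}|\nabla\psi|^{2}$, which give
\[
a\,\Delta a-\frac{p}{p+1}\,|\nabla a|^{2}=(p+1)\,\psi^{2p+1}\,\Delta\psi .
\]
On $\overline{\Omega_{x_0,\rho}}$ we have $\psi\ge 0$ and $\Delta\psi=0$, so the right-hand side vanishes and $a\,\Delta a=\frac{p}{p+1}|\nabla a|^{2}$ there. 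The exponent $p+1$ is precisely the one that makes this borderline identity hold; this is also what lets us avoid any convexity hypothesis on $\Omega$, since one must \emph{not} take $a$ proportional to $\delta^{p+1}$ itself (then $\Delta a$ would contain $\Delta\delta$, whose sign near $\partial\Omega$ is uncontrolled).

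To turn this identity into (\ref{prop-ah1}), I would first establish the pointwise bound
\[
\bigl(h'(u)\bigr)^{2}\le\frac{p}{p+1}\,h(u)\,h''(u),\qquad 0\le u<1 .
\]
Writing $h=g_{p}+g_{q}$ with $g_{s}(u)=(1-u)^{-s}$, so that $g_{s}'=s(1-u)^{-s-1}$, $g_{s}''=s(s+1)(1-u)^{-s-2}$ and hence $(g_{s}')^{2}/g_{s}''=\frac{s}{s+1}\,g_{s}$, the Cauchy--Schwarz inequality gives
\[
(h')^{2}=(g_{p}'+g_{q}')^{2}\le\Bigl(\frac{(g_{p}')^{2}}{g_{p}''}+\frac{(g_{q}')^{2}}{g_{q}''}\Bigr)(g_{p}''+g_{q}'')=\Bigl(\frac{p}{p+1}g_{p}+\frac{q}{q+1}g_{q}\Bigr)h''\le\frac{p}{p+1}\,h\,h'' ,
\]
the last step using $\frac{q}{q+1}<\frac{p}{p+1}$ (as $q<p$). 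Combined with $h,h''>0$ and with the identity above, this yields on $\overline{\Omega_{x_0,\rho}}$
\[
h\,h''\,a\,\Delta a-(h')^{2}|\nabla a|^{2}\ \ge\ h\,h''\Bigl(a\,\Delta a-\frac{p}{p+1}|\nabla a|^{2}\Bigr)=(p+1)\,h\,h''\,\psi^{2p+1}\,\Delta\psi=0 ,
\]
which is (\ref{prop-ah1}).

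It then remains to produce such a $\psi$ and to verify (\ref{prop-ah2}). I would fix $\chi\in C_{c}^{\infty}(B(x_0,\rho))$ with $\chi\ge 0$, $\chi\not\equiv 0$, and take $\psi$ to be the solution of $-\Delta\psi=\chi$ in $\Omega$, $\psi=0$ on $\partial\Omega$. Since $\partial\Omega$ is of class $C^{2+\nu}$ and $\chi$ is smooth, elliptic (Schauder) regularity gives $\psi\in C^{\infty}(\Omega)\cap C^{2+\nu}(\overline\Omega)$; the strong maximum principle applied to the superharmonic function $\psi$ gives $\psi>0$ in $\Omega$; and $\Delta\psi=-\chi$ vanishes on $\overline{\Omega_{x_0,\rho}}$ because $\mathrm{supp}\,\chi\subset B(x_0,\rho)$. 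Consequently $a=\psi^{p+1}$ belongs to $C^{2}(\Omega)$ (as $\psi>0$ in $\Omega$ and $t\mapsto t^{p+1}$ is smooth on $(0,\infty)$) and to $C^{1}(\overline\Omega)$ (as $\psi\in C^{1}(\overline\Omega)$ is nonnegative and $t\mapsto t^{p+1}$ is $C^{1}$ on $[0,\infty)$). Finally, since $\chi$ is supported away from $\partial\Omega$, the function $\psi$ is harmonic in a neighborhood of $\partial\Omega$, positive inside and zero on $\partial\Omega$; Hopf's lemma and compactness of $\partial\Omega$ then give $\partial\psi/\partial\nu\le -c_{0}<0$ on $\partial\Omega$, hence $c_{1}\delta(x)\le\psi(x)\le c_{2}\delta(x)$ near $\partial\Omega$, while away from $\partial\Omega$ both $\psi$ and $\delta$ are bounded between two positive constants. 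Raising to the power $p+1$ gives (\ref{prop-ah2}).

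The essential point, and where I expect the difficulty to lie, is the choice of ansatz in the first paragraph: recognizing that $a$ should be the $(p+1)$st power of a function that is harmonic near $\partial\Omega$ — rather than $\delta^{p+1}$ — and that the constant $p/(p+1)$, which enters naturally through the term ${\mathcal T}_3$ in (\ref{estimJR}), is exactly matched to the power $p+1$ of $\delta$ in (\ref{prop-ah2}). Once this is seen, the remaining ingredients (unique solvability, positivity and boundary regularity of $\psi$, Hopf's lemma, and the comparison $c_{1}\delta\le\psi\le c_{2}\delta$ near $\partial\Omega$) are standard elliptic theory.
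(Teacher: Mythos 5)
Your proof is correct, and the central ansatz --- taking $a=\psi^{p+1}$ with $\psi$ harmonic on $\overline{\Omega_{x_0,\rho}}$, so that $a\,\Delta a=\frac{p}{p+1}|\nabla a|^2$ there --- is exactly the paper's. You differ in two sub-arguments, each a bit cleaner than the original. First, for the pointwise inequality $h\,h''\ge\frac{p+1}{p}(h')^2$ you apply Cauchy--Schwarz to the decomposition $h=g_p+g_q$ and use $(g_s')^2/g_s''=\frac{s}{s+1}g_s$; the paper instead expands $h\,h''$ and $(h')^2$ and compares coefficients term by term, relying on $p(p+1)+q(q+1)>2(p+1)q$. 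Same inequality, but your route is structurally tidier and makes it obvious where $q<p$ enters. Second, the paper constructs $\phi$ as the harmonic function on $\Omega\setminus\overline B(x_0,\rho)$ with $\phi=0$ on $\partial\Omega$, $\phi=1$ on $\partial B(x_0,\rho)$, and then extends $a=\phi^{p+1}$ into $B(x_0,\rho)$ ``in such a way that $a\in C^2(\Omega)$ and $a>0$,'' which is left slightly informal. You instead solve $-\Delta\psi=\chi$ globally on $\Omega$ with a smooth bump $\chi\ge 0$ supported in $B(x_0,\rho)$; this produces a single globally defined $\psi$, removes the extension step entirely, and still gives harmonicity on $\overline{\Omega_{x_0,\rho}}$ plus the Hopf-lemma comparison $c_1\delta\le\psi\le c_2\delta$, hence (\ref{prop-ah2}). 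Both approaches buy the same conclusion; yours is marginally more self-contained.
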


\begin{proof}
For $0<u<1$, computing
\bea
h'(u)&=&p(1-u)^{-p-1}+
q(1-u)^{-q-1}, \label{comput-hprime1} \\
\noalign{\vskip 1mm}
h''(u)&=&p(p+1)(1-u)^{-p-2}+
q(q+1)(1-u)^{-q-2},  \label{comput-hprime2}
\eea
it follows that
\beaa
hh''
&=&p(p+1)(1-u)^{-2p-2}+
(p(p+1)+q(q+1))(1-u)^{-p-q-2}\cr
\noalign{\vskip 2mm}
&&\ +
q(q+1)(1-u)^{-2q-2}\cr
\noalign{\vskip 2mm}
&\ge&
\frac{p+1}{p}\Bigl[p^2(1-u)^{-2p-2}+2
pq(1-u)^{-p-q-2}+
q^2(1-u)^{-2q-2}\Bigr],
\eeaa
{where we used} $p(p+1)+q(q+1)> 2(p+1)q$ due to $0<q<p$. Therefore, we have
\be\label{comp-h}
hh''\ge \frac{p+1}{p}(h')^2  \qh{ for $0\le u<1$.}
\ee

Next we introduce a suitable harmonic function $\phi$, namely, the unique solution of the problem
\beaa
&&\Delta \phi=0,\quad x\in\Omega_{x_0,\rho}, \\ 
&&\phi=0, \quad x\in \partial\Omega, \\ 
&&\phi=1,\quad x\in \partial B(x_0,\rho). 
\eeaa
The function $\phi$ is smooth and, by the strong maximum principle and the Hopf Lemma,
we have $0<\phi<1$ in $\Omega_{x_0,\rho}$ {\cb and
\be\label{prop-ah2b}
c_1\delta(x)\le \phi(x)\le c_2\delta(x),\quad x\in \overline{\Omega_{x_0,\rho}}
\ee
for some positive constants $c_1,c_2$.}
Then we set
\be\label{defaphi}
a(x)=\phi^{p+1}(x),\quad x\in\overline{\Omega_{x_0,\rho}}.
\ee
Since $a\in C^2(\Omega_{x_0,\rho})$, the boundary $\partial B(x_0,\rho)$ is smooth and $a=1$ on $\partial B(x_0,\rho)$,
the function $a$ can be extended in $B(x_0,\rho)$
in such a way that $a\in C^2(\Omega)$ and $a>0$ in $\Omega$.

On the other hand, on $\Omega_{x_0,\rho}$, we compute:
$$\nabla a=(p+1)\phi^p\nabla\phi,\qquad
\Delta a=(p+1)[\phi^p\Delta\phi+p\phi^{p-1}|\nabla\phi|^2]=p(p+1)\phi^{p-1}|\nabla\phi|^2,$$
{\cb hence}
\be\label{comp-a}
a\Delta a=\frac{p}{p+1} |\nabla a|^2   \qh{on $\Omega_{x_0,\rho}$.}
\ee
Combining (\ref{comp-h}) and (\ref{comp-a}), we get (\ref{prop-ah1}).
Property (\ref{prop-ah2}) follows from (\ref{prop-ah2b}), (\ref{defaphi})
{and $a>0$ in $\Omega$.}
\end{proof}

Before going further, let us recall the following useful lower bound on $u_t$.

\begin{lemma} \label{utHopf}
There exists a constant $c_0>0$ such that
\be\label{lowerut}
u_t\ge c_0 \delta(x) \qh{ on $\Omega\times [T/2,T)$.}
\ee
\end{lemma}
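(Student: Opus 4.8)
The plan is to establish the pointwise lower bound $u_t(x,t)\ge\eta(t)\,\varphi_1(x)$ on $\Omega\times[T/2,T)$, where $\varphi_1>0$ denotes the first Dirichlet eigenfunction of $-\Delta$ on $\Omega$ (which is comparable to $\delta$ up to the boundary) and $\eta$ is a positive function bounded away from $0$ on $[T/2,T)$; the inequality (\ref{lowerut}) is then immediate. The ingredients are: (i) the monotonicity $u_t\ge0$ and its strict positivity inside $\Omega$; (ii) a Hopf-type lower bound at the fixed time $t=T/2$; and (iii) propagation of this bound forward in time by comparison with a separated-variables subsolution of the heat equation.

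For (i), since $u>0$ in $\Omega$ for $t>0$ while $u(\cdot,0)\equiv0$, for each $h>0$ the function $(x,t)\mapsto u(x,t+h)$ solves (\ref{pdeGen})--(\ref{bcGen}) with initial value $u(\cdot,h)\ge0=u(\cdot,0)$, so the comparison principle gives $u(\cdot,t+h)\ge u(\cdot,t)$, that is $u_t\ge0$ on $\Omega\times(0,T)$. Differentiating (\ref{pdeGen}) in $t$, the function $v:=u_t$ solves $v_t=\Delta v+pf(x)(1-u)^{-p-1}v$ with $v=0$ on $\partial\Omega$, and since $v\ge0$ with $v\not\equiv0$ (because $u$ quenches, hence is nonconstant in time), the strong maximum principle yields $v>0$ in $\Omega\times(0,T)$.

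For (ii), I would freeze the time $t=T/2$: the function $w:=u_t(\cdot,T/2)$ lies in $C^1(\overline\Omega)$, is positive in $\Omega$, vanishes on $\partial\Omega$, and satisfies $-\Delta w=pf(x)(1-u)^{-p-1}w-u_{tt}$ at $t=T/2$, whose right-hand side is bounded on $\overline\Omega$ (recall $u<1$ on $\overline\Omega\times\{T/2\}$). As $\Omega$ is of class $C^{2+\nu}$ it enjoys a uniform interior sphere condition, so the Hopf boundary point lemma gives $\partial_\nu w\le -c<0$ on $\partial\Omega$; together with the positivity of $w$ on the compact set where $\delta(x)\ge\delta_0$, for a small fixed $\delta_0>0$, this produces a constant $c_1>0$ with $u_t(x,T/2)\ge c_1\delta(x)$ on $\overline\Omega$. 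Then for (iii), let $\lambda_1>0$ and $\varphi_1>0$ (normalized by $\max_{\overline\Omega}\varphi_1=1$) be the first Dirichlet eigenpair; by elliptic regularity $\varphi_1\in C^1(\overline\Omega)$ and, by the Hopf lemma again, $c_\ast\delta(x)\le\varphi_1(x)\le C_\ast\delta(x)$ on $\overline\Omega$ for some $0<c_\ast\le C_\ast$. Put $\underline v(x,t):=(c_1/C_\ast)\,e^{-\lambda_1(t-T/2)}\,\varphi_1(x)$. Then $\underline v_t-\Delta\underline v=0$ and $\underline v=0$ on $\partial\Omega$, so $\underline v$ is a solution, hence in particular a subsolution, of the heat equation, while $v=u_t$ is a supersolution of the heat equation since $v_t-\Delta v=pf(x)(1-u)^{-p-1}v\ge0$ and $v=0$ on $\partial\Omega$; moreover $\underline v(\cdot,T/2)=(c_1/C_\ast)\varphi_1\le c_1\delta\le u_t(\cdot,T/2)$. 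Applying the comparison principle for the heat equation on $\Omega\times[T/2,T-\eta]$ for each $\eta\in(0,T/2)$ and letting $\eta\to0$, we get $u_t\ge\underline v$ on $\Omega\times[T/2,T)$; since $T<\infty$ we have $e^{-\lambda_1(t-T/2)}\ge e^{-\lambda_1 T/2}$ there, and with $\varphi_1\ge c_\ast\delta$ this gives $u_t(x,t)\ge c_0\,\delta(x)$ with $c_0:=(c_1 c_\ast/C_\ast)e^{-\lambda_1 T/2}$, as claimed.

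The argument is essentially soft, and I expect no genuine obstacle. The only point requiring a little attention is that the zeroth-order coefficient $pf(x)(1-u)^{-p-1}$ in the equation satisfied by $u_t$ is not bounded up to the quenching time $t=T$; this is why the comparison in step (iii) is performed against sub/supersolutions of the plain heat operator, for which no condition on lower-order coefficients is needed, after exhausting $[T/2,T)$ by the compact subintervals $[T/2,T-\eta]$.
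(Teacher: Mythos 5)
Your proof is correct and takes essentially the same route as the paper: both arguments rest on the facts that $v:=u_t\ge 0$ and that $v$ is a supersolution of the heat equation (since the zeroth-order term $pf(x)(1-u)^{-p-1}v$ is nonnegative), and then compare $v$ with a heat-equation solution or subsolution that obeys a Hopf-type lower bound comparable to $\delta(x)$. The paper is slightly more economical: it compares $v$ directly with the heat-equation solution $z$ with data $z(\cdot,0)=f$, $z|_{\partial\Omega}=0$, and applies the Hopf lemma and strong maximum principle to $z$ on $[T/2,T)$, thereby bypassing your frozen-time step (ii) and the eigenfunction construction in step (iii). One small technical caveat in your step (ii): invoking the \emph{elliptic} Hopf lemma for $w=u_t(\cdot,T/2)$ via $-\Delta w = pf(x)(1-u)^{-p-1}w - u_{tt}$ is not justified as written, because the source term $-u_{tt}$ has no definite sign; the conclusion $u_t(\cdot,T/2)\ge c_1\delta(x)$ is nevertheless correct, and is most cleanly obtained from the \emph{parabolic} Hopf lemma applied to $v$ on $\Omega\times(0,T/2]$, where the coefficient $pf(x)(1-u)^{-p-1}$ is bounded and $v>0$ in the parabolic interior, or simply by the paper's comparison with $z$.
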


\begin{proof}
{\cb Although the proof of the lemma is standard (cf. \cite{FM,G1,G08}), we provide a proof here for completeness.}
Setting $v=u_t$, we see that $v$ satisfies
\beaa
&&v_t={\Delta u}+pf(x)(1-u)^{-p-1}v,\quad x\in\Omega,\; 0<t<T,\\   
&&v=0,\quad x\in\partial\Omega,\ 0<t<T,\\
&&v(x,0)=f(x),\quad x\in\Omega,
\eeaa
so that $v=u_t\ge 0$  in $Q_T:=\Omega\times(0,T)$ by the maximum principle.

Applying the maximum principle again, we deduce that $u_t\ge z$ in $Q_T$,
where $z$ is the solution of the heat equation in $Q_T$, with zero boundary condition and
initial condition $z(\cdot,0)=f$. Since $z$ satisfies the estimate~(\ref{lowerut})
in virtue of the Hopf lemma and the strong maximum principle, so does~$u_t$.
\end{proof}

\subsection{Proof of Theorem 2}

$ $

\medskip

{\bf Step 1.} {\it Preparations.}
Since $f\ge 0$ and $f\not\equiv 0$ is continuous,
we may pick a point $x_0\in \Omega$ and
$\rho>0$ such that $B(x_0,2\rho)\subset\Omega$ and
\be\label{inffx0}
\sigma_1:=
\inf_{x\in \overline B(x_0,\rho)}f(x)>0.
\ee
We then take $a\in C^1(\overline\Omega)\cap C^2(\Omega)$ as given by Lemma~\ref{lem2},
and define $J$ by
$$J(x,t)=u_t-\eps a(x)h(u),$$
with $\eps>0$ to be fixed later and
\be\label{defh2}
h(u)=(1-u)^{-p}+(1-u)^{-q}, \quad \hbox{\cb $0\le u<1$,\quad where $q=p/2$}
\ee
{\cb (any choice of $q\in (0,p)$ would do).}
Note that
\be\label{infax0}
\sigma_2:=
\inf_{x\in \overline B(x_0,\rho)}a(x)>0.
\ee
Next, we split the cylinder $\Sigma:=\Omega\times (T/2,T)$ into three
sub-regions as follows:
$$\Sigma_1=\bigl(\Omega\setminus \overline B(x_0,\rho)\bigr)\times (T/2,T),$$
$$\Sigma_2^\eta=\bigl\{(x,t)\in  \overline B(x_0,\rho)\times(T/2,T);\ u(x,t)\ge 1-\eta \bigr\}$$
and
$$\Sigma_3^\eta=\bigl\{(x,t)\in  \overline B(x_0,\rho)\times(T/2,T);\ u(x,t)< 1-\eta \bigr\},$$
where the number $\eta\in (0,1)$ will be specified later on.

\medskip

{\bf Step 2.} {\it Parabolic inequality for $J$ in the sub-regions
$\Sigma_1$ and $\Sigma_2^\eta$.}
It follows from properties~(\ref{estimJR}) in Lemma~\ref{lem1} and (\ref{prop-ah1}) in Lemma~\ref{lem2}, along with $a>0$, $f\ge 0$ {\cb in $\Omega$ and} $h''>0$, that
\be\label{ineqJSigma1}
J_t- \Delta J -pf(x)(1-u)^{-p-1}J\ge 0 \qh{ in $\Sigma_1$.}
\ee
Next, in view of {\cb \eqref{defh2} and} (\ref{comput-hprime1}), 
we have
\beaa
|h\Delta a|\le  {\cb C_3}(1-u)^{-p},\quad |h'\nabla a|\le {\cb C_3}(1-u)^{-p-1}
\qh{{in $\Sigma$}}
\eeaa
{\cb for some positive constant $C_3$ independent of $\varepsilon,\eta$. 
Also, from (\ref{comput-hprime2}) and \eqref{infax0} we get
\beaa
a h''\ge  \sigma_2 p(p+1)(1-u)^{-p-2} \qh{ in $\overline{B}(x_0,\rho)\times(0,T)$.}
\eeaa
}
Consequently, recalling the definition of $R$ in Lemma~\ref{lem1},
it follows from (\ref{estimJR}), (\ref{inffx0}) and (\ref{infax0}) that
\beaa
(1-u)^{p+q+1}R
&\ge& (p-q)f(x)a +h\Delta a(1-u)^{p+q+1}-\frac{(h'|\nabla a|)^2}{ah''}(1-u)^{p+q+1}\cr
\noalign{\vskip 1mm}
&\ge& (p-q)\sigma_1\sigma_2
-{\cb C_4} (1-u)^{q+1}
\ge (p-q)\sigma_1\sigma_2
-{\cb C_4}\eta^{q+1}  \qh{ in $\Sigma_2^\eta$,}
\eeaa
{\cb for some positive constant $C_4$ independent of $\varepsilon,\eta$.}
Owing to (\ref{identJR}), we may thus choose $\eta\in (0,1)$ small,
independent of $\eps$, such that
\be\label{ineqJSigma2}
J_t- \Delta J -pf(x)(1-u)^{-p-1}J\ge 0
 \qh{ in $\Sigma_2^\eta$.}
 \ee

\medskip

{\bf Step 3.}  {\it Control of $J$ on $\Sigma_3^\eta$ and conclusion.}
Now that $\eta$ has been fixed, using (\ref{prop-ah2}),
(\ref{lowerut}) and {\cb \eqref{defh2}}, 
we may choose $\eps>0$ small enough, so that
\be\label{ineqJSigma3}
J\ge \delta(x)\Bigl[c_0 - 2C_2\eps\delta^p(x) (1-u)^{-p}\Bigr]
\ge \delta(x)\Bigl[c_0 - 2C_2\eps\delta^p(x) \eta^{-p}\Bigr]\ge 0
  \qh{in $\Sigma_3^\eta$}
   \ee
  and
\be\label{ineqJinit}
J(x,T/2)\ge \delta(x)\Bigl[c_0 -2
C_2\eps\delta^p(x) \bigl(1-\|u(\cdot,T/2)\|_\infty\bigr)^{-p}\Bigr]\ge 0
  \qh{in $\overline\Omega$,}
\ee
{\cb where $c_0$ is the constant in Lemma~\ref{utHopf} and $C_1,C_2$ are the constants in \eqref{prop-ah2}.}
   Observe that, as a consequence of (\ref{ineqJSigma3})
   and $\Sigma=\Sigma_1\, \cup \,\Sigma_2^\eta\,\cup\, \Sigma_3^\eta$,
  we have
 \be\label{Csq-ineqJSigma}
 \bigl\{(x,t)\in \Sigma;\ J(x,t)<0\bigr\}\subset \Sigma_1\cup \Sigma_2^\eta.
 \ee
Also, since $a=0$ on $\partial\Omega$, we have
\be\label{ineqJboundary}
J=0 \qh{ on $\partial\Omega\times (T/2,T)$.}
\ee
On the other hand, by standard parabolic regularity,
we observe that
$$J\in C^{2,1}(\Sigma)\cap C(\overline\Omega\times [T/2,T)).$$

It follows from (\ref{ineqJSigma1})-(\ref{ineqJSigma2}),
(\ref{ineqJinit})-(\ref{ineqJboundary}) and
the maximum principle
(see e.g. \cite[Proposition 52.4 and Remark 52.11(a)]{QS})
that
$$J\ge 0 \qh{  in $\Sigma$.}$$
Then, for $T/2<t<s<T$ and $x\in \Omega$, taking (\ref{prop-ah2}) into account, an integration in time gives
\beaa
(1-u(x,t))^{p+1}
&\ge& (1-u(x,s))^{p+1}+C_1\eps(p+1)\delta^{p+1}(x)(s-t) \\
&\ge& C_1\eps(p+1)\delta^{p+1}(x)(s-t).
\eeaa
Letting $s\to T$, we finally deduce (\ref{rate})
in $\Sigma$, hence in $\Omega\times (0,T)$.
{\cb Note that the constants $C_1,\varepsilon$ are independent of $x,t$, so is the constant $\gamma$ in \eqref{rate}.} 
\qed

\section{Proof of Theorem 1}
\setcounter{equation}{0}

With the type~I estimate \eqref{rate} of Theorem~2 at hand, the proof is done via a suitable local comparison function. 
Let $x_0\in \Omega$ be such that $f(x_0)=0$ and take $b_0\in(0,1)$ small such that
\be\label{inclx0}
\overline B(x_0,2b_0)\subset \Omega.
\ee
We consider the following function
\beaa
w(x,t):=1-A\bigl[\phi(x)+(T-t)\bigr]^{1/(p+1)}
\qh{ in $\overline B(x_0,b)\times[0,T)$},
\eeaa
where
$$ \phi(x):=\kappa b^2 \left(1-\frac{|x-x_0|^2}{b^2}\right)^2.$$
Here, $\kappa\in (0,1)$ and $b\in (0,b_0)$ are constants to be chosen later and $A$ is a fixed positive constant
such that $A\le\gamma b_0$ and $A\le (1+T)^{-1/(p+1)}$ (where $\gamma$ is the constant given in~\eqref{rate}).
Note that
$$w(x,0)=1-A[\phi(x)+T]^{1/(p+1)}\ge 0 \qh{ for $x\in \overline B(x_0,b)$}$$
 and
\beaa
w(x,t)&=&1-A(T-t)^{1/(p+1)}\\
&\ge& 1-\gamma\delta(x)(T-t)^{1/(p+1)}\ge u(x,t)  \qh{ for $(x,t)\in\partial B(x_0,b)\times(0,T)$,}
\eeaa
due to (\ref{inclx0}), $A\le\gamma b_0$ and  \eqref{rate}.

We compute, in $B(x_0,b )\times(0,T)$,
\beaa
&&w_t- \Delta w-f(x)(1-w)^{-p}\\
&=&\frac{A}{p+1}\bigl[\phi(x)+(T-t)\bigr]^{-1+\frac{1}{p+1}} +\frac{A}{p+1}\bigl[\phi(x)+(T-t)\bigr]^{-1+\frac{1}{p+1}} \Delta\phi \\
&&\quad  -\frac{Ap}{(p+1)^2}\bigl[\phi(x)+(T-t)\bigr]^{-2+\frac{1}{p+1}} |\nabla \phi|^2-f(x)A^{-p}\bigl[\phi(x)+(T-t)\bigr]^{-p/(p+1)}\\
&=&\frac{A}{p+1}\bigl[\phi(x)+(T-t)\bigr]^{-p/(p+1)}\\
&&\quad \times \left\{1+\Delta\phi-\frac{p}{p+1}\bigl[\phi(x)+(T-t)\bigr]^{-1} |\nabla \phi|^2-(p+1)A^{-p-1} f(x) \right\},\\
\eeaa
hence
\bea
&&w_t- \Delta w-f(x)(1-w)^{-p}\label{Parabw}\\
&\ge&\frac{A}{p+1}\bigl[\phi(x)+(T-t)\bigr]^{-p/(p+1)}
\left\{1 +\Delta\phi  - \frac{p}{p+1}\frac{|\nabla \phi|^2}{\phi}-(p+1)A^{-p-1} f(x)\right\}. \notag
\eea
Moreover, in $B(x_0,b)$ we have
\beaa
&& \nabla\phi(x)=-4\kappa \left(1-\frac{|x-x_0|^2}{b^2}\right)\,(x-x_0),\\
&& \Delta\phi(x)=-4{n}\kappa \left(1-\frac{|x-x_0|^2}{b^2}\right)+8\kappa \left(\frac{|x-x_0|}{b}\right)^2
\ge -4{n}\kappa \\
&&\frac{|\nabla\phi(x)|^2}{\phi (x)} =16\kappa \left(\frac{|x-x_0|}{b}\right)^2 \le 16\kappa.  \\
\eeaa
Now, since $f(x_0)=0$, we may choose $b>0$ small enough so that
$$(p+1)A^{-p-1}\sup_{x\in B(x_0,b)}f(x)\le \frac{1}{3}.$$
Then we can choose $\kappa {=\kappa(n)}\in(0,1)$ small enough such that
\beaa
\Delta\phi(x)\ge -\frac{1}{3}, \quad \frac{|\nabla\phi(x)|^2}{\phi (x)}\le\frac{1}{3}\quad\mbox{ for all $x\in B(x_0,b)$.}
\eeaa
Therefore, $w_t- \Delta w-f(x)(1-w)^{-p}\ge 0$ in  $B(x_0,b )\times(0,T)$,
and it follows from the comparison principle that $u\le w$ in $B(x_0,b)\times(0,T)$.
{Since $\min_{\overline B(x_0,b/2)}\phi>0$,} this implies that $x=x_0$ is not a quenching point and the theorem is proved.
\qed

\bigskip

\section{Proof of Theorem 3}

(i) {\bf Case of (\ref{HypBdry1}).} Assume without loss of generality that $0\neq x_0\in\partial\Omega$ with $B(0,|x_0|)\cap\Omega=\emptyset$.
This (exterior ball condition) is possible due to the assumption that $\partial\Omega\in C^{2+\nu}$.
We look for a supersolution of the form $z(x)=1-C(d-r)^{\cb \beta}$, $r=|x|$, with $\beta>1$, $C{>0}, d>|x_0|$
to be chosen.
For $0<r<d$, we have:
\beaa
z_r&=&\beta C(d-r)^{\beta-1} \\
z_{rr}&=&-\beta (\beta-1)C (d-r)^{\beta-2}.
\eeaa
Set $\omega:=\Omega\cap\{x\in\BR^n;\ |x_0|<|x|<d\}$.
Choosing $\beta=2/(p+1)>1$, so that $\beta-2=-\beta p$, we compute in $\omega$:
\beaa
-\Delta z-f(x)(1-z)^{-p}&=&\beta (\beta-1)C(d-|x|)^{\beta-2}-\frac{\beta C(n-1)(d-|x|)^{\beta-1}}{|x|}\\
&&\ -f(x)\bigl[C(d-|x|)^\beta\bigr]^{-p} \\
&=&C(d-|x|)^{-\beta p}\Big[\beta(\beta-1)-\frac{\beta (n-1)(d-|x|)}{|x|}-C^{-p-1}f(x)\Bigr].
\eeaa
Next taking $d=d(p,n,|x_0|)>|x_0|$ close to $|x_0|$ and $C=C(p,\|f\|_\infty)>0$ large, we then have,
in $\omega$:
$$-\Delta z-f(x)(1-z)^{-p}\ge
C(d-|x|)^{-\beta p}\Big[\beta(\beta-1)-\frac{\beta (n-1)(d-|x_0|)}{|x_0|}-C^{-p-1}\|f\|_\infty\Bigr]\ge 0.$$
By taking $d$ possibly closer to $|x_0|$, we have
$$z(x)\ge 1-C(d-|x_0|)^\beta\ge 0\qh{ in $\overline\omega$},$$
hence also $z(x)\ge u(x,t)=0$ for $x\in \partial\omega\cap\{|x|<d\}=\partial\omega\cap\partial\Omega$ and $t\in (0,T)$.
Since on the other hand $z(x)=1>u(x,t)$ for $x\in \partial\omega\cap\{|x|=d\}$ and $0<t<T$, we deduce from the comparison principle that
$z\ge u$ on $\omega\times (0,T)$. Therefore $x_0$ is not a quenching point.
\medskip

(ii) {\bf Case of (\ref{HypBdry}).}  The proof {relies on estimate \eqref{rate} and} is a modification of that of Theorem~1.
Let $\Omega_\eta=\{x\in\Omega;\ \delta(x)<\eta\}$. {\cb There exists $\eta_0>0$ such that
$\Omega_\eta$ is a smooth bounded domain for all $\eta\in (0,\eta_0)$, due to $\Omega$ being a smooth domain.} We have
$\partial\Omega_\eta=\partial\Omega\cup\Gamma_\eta$, where $\Gamma_\eta=\{x\in\Omega;\ \delta(x)=\eta\}$.

Let $\psi_\eta$ be the unique solution of the problem
\beaa
&&\Delta \psi_\eta=0,\quad x\in\Omega_\eta, \\
&&\psi_\eta=1, \quad x\in \partial\Omega, \\
&&\psi_\eta=0,\quad x\in \Gamma_\eta. %
\eeaa
The function $\psi_\eta$ is smooth and, by the strong maximum principle,
we have $0<\psi_\eta<1$ in $\Omega_\eta$.
Letting
$$\phi=k\psi_\eta^2,$$
we consider the function
\beaa
w(x,t):=1-\gamma\eta\bigl[\phi(x)+(T-t)\bigr]^{1/(p+1)}
\qh{ in $\overline\Omega_\eta\times [0,T)$},
\eeaa
where $k\in (0,1)$ and $\eta\in (0,\eta_0)$ are constants to be chosen later {\cb and $\gamma$ is the constant given in \eqref{rate}.}
First assuming $\eta\le \eta_1:=\min\bigl(\eta_0,\gamma^{-1}(1+T)^{-1/(p+1)}\bigr)$, we have
$$w(x,0)=1-\gamma\eta[\phi(x)+T]^{1/(p+1)}\ge 0 \qh{ in $\overline\Omega_\eta,$}$$
along with $w(x,t)\ge 0$ on $\partial\Omega\times (0,T)$ and, by \eqref{rate},
\beaa
w(x,t)&=&1-\gamma\eta(T-t)^{1/(p+1)}= 1-\gamma\delta(x)(T-t)^{1/(p+1)}\ge u(x,t)  \qh{ on $\Gamma_\eta\times(0,T)$.}
\eeaa

Formula (\ref{Parabw}) remains valid, with $A$ replaced by $\gamma\eta$,
and moreover we have
$$\Delta\phi=2k|\nabla\psi_\eta|^2 \qh{ and }\quad \frac{|\nabla\phi|^2}{\phi}=4k|\nabla\psi_\eta|^2.$$
Therefore,
\beaa
&&w_t-\Delta w-f(x)(1-w)^{-p}\\
&\ge&\frac{\gamma\eta}{p+1}\bigl[\phi(x)+(T-t)\bigr]^{-p/(p+1)}
\left\{1-\frac{4kp}{p+1} |\nabla \psi_\eta|^2-(p+1)(\gamma\eta)^{-p-1} f(x)\right\}
\eeaa
{in $\Omega_\eta\times [0,T)$.} Now, by assumption (\ref{HypBdry}), we may choose $\eta\in(0,\eta_1)$ small enough so that
$$(p+1)(\gamma\eta)^{-p-1}\sup_{x\in\Omega_\eta}f(x)\le \frac{1}{2}.$$
Then we can choose $k=k(\eta)>0$ small enough so that
\beaa
4k|\nabla\psi_\eta|^2\le\frac{1}{2}\quad\mbox{ for all $x\in\Omega_\eta$.}
\eeaa
Therefore, $w_t-\Delta w-f(x)(1-w)^{-p}\ge 0$ in $\Omega_\eta\times(0,T)$,
and it follows from the comparison principle that $u\le w$ in $\Omega_\eta\times(0,T)$.
Since $\min_{\overline\Omega_{\eta/2}}\psi_\eta>0$, this guarantees that no quenching occurs near the boundary.
\qed

\medskip

{{\bf Remark.} Although it is not clear if Theorem 3(i) has a direct relation with this fact,
it is interesting to recall that, when $f$ is constant, quenching for problem (\ref{pdeGen})-(\ref{icGen}) is
incomplete (in the sense of existence of a suitable weak continuation after $t=T$)
if and only if $0<p<1$ (cf.~\cite{Phi87, FLV, GV97}).}

\end{document}